\theoremstyle{definition}
\newtheorem{definition}{Definition}
\newtheorem{theorem}{Theorem}
\newtheorem{lemma}{Lemma}
\def\Z{{\mathbb Z}}
\def\R{{\mathbb R}}
\def\cC{{\mathbb C}}
\def\Q{{\mathbb Q}}
\title{Maps from knots to 2-component links,
chord diagrams, and a way to enhance
Vassiliev invariants.}
\author{Vassily Olegovich Manturov}
\begin{document}

\maketitle

\begin{abstract}
In the present paper, we discuss a way of generalising Vassiliev knot
invariants for knots
in $3$-manifolds and weight systems to
framed chord diagrams having framing
$0$ and $1$.
\end{abstract}

Keywords: knot, link, Vassiliev invariant,
chord diagram, weight system,
parity, Kontsevich integral,
satellite knot, 3-manifold, link homotopy

AMS MSC: 57K10,57K12,57K14,81T18

%%%Section for Kauffman

\section{Introduction}

It is well known that symbols of the
Vassiliev knot invariants give rise
to {\em weight systems}, i.e., 
linear functions on chord diagrams 
satisfying the 4-term and the 1-term
relation. There is a {\em framed}
counterpart of chord diagrams, 
i.e., diagrams having {\em odd}
and {\em even} chords, with 4T-relations
modified accordingly \cite{IM}.

%The present paper is expository 
%and is a part of research plan.
%
%%

Framed chord diagrams seem to be
more interesting and richer than
usual ones and it would be interesting
to apply them to some knot theories.

$\Z_{2}$-framings and parities seem
to be very important to construct 
a principally new sort of knot invariants,
{\em picture-valued invariants}, 
see \cite{Parity2010}.
In the case of virtual knots where
parity is highly non-trivial \cite{VKnots}, one
can construct lots of new invariants,
including Vassiliev invariants based
on parities of chords giving
yet another motivation of the present paper.

Also, framed chord diagrams naturally
appear in the study of Legendrian
knots, and study of plane curves \cite{Lando}.\footnote{Such curves may have
two types of tangencies which
can naturally correspond to 
0-framings and 1-framings of 
chord diagrams, see \cite{IM}.}

In the present paper we address the
question about the geometric meaning
of such relations and ways to
obtain parity for classical
objects.

We derive framed relations from the Vassiliev invariants
of knots in $3$-manifolds having
non-trivial first $\Z_{2}$-cohomology
and discuss how this techniques 
can be applied to classical knots
in $\R^{3}$.

Throughout the paper, all knots and links are assumed oriented, unless specified otherwise.

Note that present paper is expository
from the point of view of invariants.
Here we present a few theorems,
but the main goal is ``to pass from
the classical world (with usual
Vassiliev invariants, chord diagram algebra, weight systems), to the ``virtual world'' where all objects and invariants are more interesting and complicated.

Some possible further steps
are collected in the epilogue.

The paper is organised as follows.
In Section 2, we define various linear
spaces and algebras of chord diagrams\footnote{Actually, 
multiplication is well-defined
for usual (unframed) chord diagrams,
but not on framed ones; we shall discuss it later}.
In Section 3, we discuss Vassiliev's
invariants for knots in various
3-manifolds, in particular,
in complements to knots in $\R^{3}$.
Theorem \ref{mainthm} says
the symbol of any ``good'' Vassiliev invariant considered
as a function on framed chord diagrams
gives rise to a function satisfying the {\em framed $4T$-relation} (and $1T$-relation)
where framing comes from the $\Z_{2}$-homology
of the ambient manifold. It is
known that in $\R^{3}$ weight systems satisfying
the 1T-relation give rise to Vassiliev
invariants. This is realised
by the Kontsevich integral, which is then discussed in Section 4. 
We address the same question in the framed
case. In Sections 5 we discuss our further
strategy: we may take a knot $K$,
take its $2$-cable $K_{1}\sqcup K_{2}$
and then take the value of {\em some 
finite-type invariant $\phi$
of knots in $\R^{3}\backslash K_{2}$}.

The problem is that the for different
knots $K$ one should take {\em different
functions $\phi$ } (since the domain
of $\phi$ is $\R^{3}\backslash K_{2}$).

But we conjecture that from some weight systems
on framed chord diagrams one could be 
able to get invariants for all such
$\R^{3}\backslash K_{2}$ and consequently
get new invariants of ordinary knots
in $\R^{3}$ with some flavour of
framing and parity.

In Section 5
 we
formulate further research problems.

\subsection{Acknowledgements}

I am grateful to Seongjeong Kim, Igor Mikhailovich Nikonov, and Alina Vassilievna Pital 
for useful discussions.

I am extremely grateful to the referee
for lots of valuable remarks.

 This work  was funded by the RSF grant No. 25-21-00884.

\section{Chord diagrams and framed chord diagrams}

\begin{definition}
By a {\em chord diagram} on $n$ chords
we mean a graph on $2n$ vertices consisting
of one oriented Hamiltonian cycle (called {\em the core circle of the diagram})
and $n$ edges (called {\em chords}) so
that each vertex is incident to exactly one
chord.

\end{definition}

\begin{definition}
Let $S$ be a set.
We say that a chord diagram $C$ is {\em framed}
with framing $S$ if each chord of $C$ is associated
with an element from $S$.

Later on, we shall deal only with $\Z_{2}$-framings.
\end{definition}

By the {\em empty chord diagram} we mean one
circle with no chords. Chord diagrams are
graded by the number of chords.

Let $F$ be the ground field (we shall
mostly deal with $\R$, $\cC$ or $\Q$),
and let us consider the linear spaces
$C_{n}$ and $C^{f}_{n}$ generated by all chord diagrams and all
framed chord diagrams respectively.

Now, let us take the quotient
spaces of $C_{n}$ by the
$4T$-relation, see the top row
of Fig. \ref{Framed},
and the quotient space of
 (resp., $C^{f}_{n}$) by all 
4T-relations shown in Fig. \ref{Framed}).

\begin{figure}
\centering\includegraphics[width=200pt]{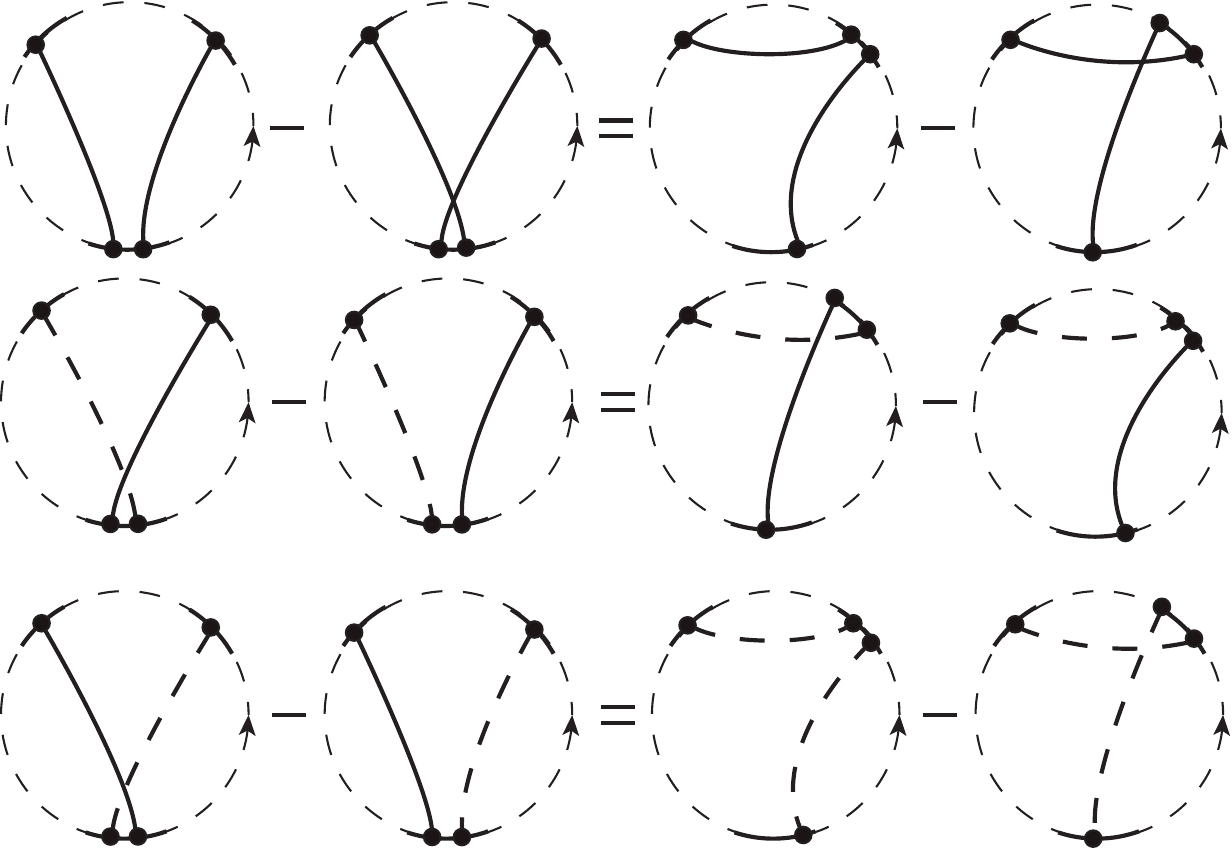}
\caption{The framed 4T-relations;
standard 4T-relation in the top row}
\label{Framed}
\end{figure}

We call these spaces ${\cal C}_{n}$
and ${\cal C}_{n}^{f}$, respectively.

Similarly, let us define the 
quotient space ${\cal C}^{f*}_{n}$
as the quotient space of $C_{n}^{f}$
by the relations where the
bottom row of Fig. \ref{Framed}
is replaced with Fig.\ref{framed4T}.
 Here solid chords 
correspond to framing 0; dashed chords
correspond to framing 1.

In total, we have $n$ chords where
only two of them are depicted in the
figure, and
$n-2$ of them are fixed and have
endpoints on dashed parts of the circle.

\begin{figure}
\centering\includegraphics[width=200pt]{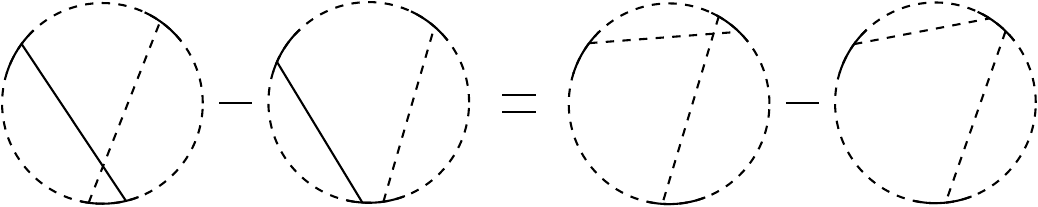}

\caption{The other version of
the last framed $4T$-relation}

\label{framed4T}
\end{figure}

\begin{lemma}
For each $n$, the spaces ${\cal C}_{n}^{f}$
and ${\cal C}_{n}^{f*}$ are naturally isomorphic. \label{lmm}
\end{lemma}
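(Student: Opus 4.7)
Both ${\cal C}_{n}^{f}$ and ${\cal C}_{n}^{f*}$ are obtained from the \emph{same} space $C_{n}^{f}$ by passing to a quotient, and the only difference is in the last of the framed $4T$-relations (the bottom row of Fig.~\ref{Framed} vs.\ Fig.~\ref{framed4T}); the other relations (the unframed $4T$-relation and any relations involving only solid chords) coincide. The plan is therefore to show that the identity map on $C_{n}^{f}$ descends to mutually inverse linear maps between the two quotients. This reduces the lemma to the purely linear-algebraic statement: inside $C_{n}^{f}$ the subspace of relations spanned by the bottom row of Fig.~\ref{Framed} coincides with the subspace spanned by Fig.~\ref{framed4T}.

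First I would fix a generic configuration of $n-2$ chords (which are treated as spectators) and two distinguished chords whose endpoints lie on four designated arcs of the circle, one of which is allowed to slide past a neighbouring endpoint; I would then write out the four-term sums given by Fig.~\ref{Framed} (bottom row) and by Fig.~\ref{framed4T} as explicit vectors in $C_{n}^{f}$, tracking whether each chord carries framing $0$ (solid) or $1$ (dashed). Next, I would express one relation as a $\Z$-linear combination of instances of the other: starting from the relation in Fig.~\ref{framed4T}, I would add/subtract instances of the bottom-row relation in Fig.~\ref{Framed} for the appropriate choices of framings on the moving and stationary chord so that the terms with mismatched framings cancel pairwise and only the four terms of Fig.~\ref{framed4T} remain. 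The same procedure run backwards expresses the bottom-row relation as a combination of instances of Fig.~\ref{framed4T}.

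Having shown that both sets of relations span the same subspace $R \subseteq C_{n}^{f}$, the quotients ${\cal C}_{n}^{f} = C_{n}^{f}/R = {\cal C}_{n}^{f*}$ are literally equal, and the induced isomorphism is natural because it is induced by the identity on generators, so it commutes with any morphism of framed chord diagrams (in particular with relabelling of chords, reversal of the circle, and the closure map from arc diagrams).

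The main obstacle is purely bookkeeping: carefully matching the framings of the two distinguished chords in each of the four terms and verifying that the cancellation in step two works for \emph{every} assignment of framings to the $n-2$ spectator chords, not only for one. Once the substitution table for the two framings on the moving/stationary pair is written down, the check is straightforward; the conceptual content is that the two versions of the ``last'' framed $4T$-relation differ only by a change of basis within the same relation subspace.
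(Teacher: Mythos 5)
There is a genuine gap, and it sits exactly at the step you postpone as ``bookkeeping''. Your whole plan rests on the claim that the relation subspace spanned by the bottom row of Fig.~\ref{Framed} and the one spanned by Fig.~\ref{framed4T} are \emph{equal} inside $C_{n}^{f}$, so that the identity map on generators descends to the quotients. You never actually exhibit the linear combinations that would prove this, and there is no reason to expect they exist: the two versions of the last framed $4T$-relation differ by the sign of one of its two sides, while the diagrams appearing on the two sides are genuinely different framed diagrams (when an endpoint of the moving chord jumps past an end of a dashed chord, the $\Z_{2}$-framing of the moving chord changes, since its defining loop changes by the loop of the dashed chord). So an instance of one relation is not visibly, and most likely not at all, a combination of instances of the other; ``cancelling mismatched framings pairwise'' is precisely the assertion that needs proof, and it is where the argument breaks.

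The paper's proof takes a different and much shorter route: the isomorphism is \emph{not} induced by the identity but by the diagonal map $c\mapsto(-1)^{\downarrow}c$, where $\downarrow$ is the number of chords of $c$ with framing $1$. Because within one instance of the last framed $4T$-relation the two sides differ by exactly one odd chord, this sign change flips the sign of one side (LHS or RHS) and not the other, carrying each relation of one type to a relation of the other type; hence the map sends one relation subspace onto the other and induces the desired isomorphism of quotients. If you replace your identity map by this sign-twisted map, your strategy becomes correct and the verification reduces to the one-line sign check above; as written, however, the central claim (equality of the two relation subspaces) is unproved and almost certainly false.
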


\begin{proof}
Indeed, the isomorphism takes
any chord diagram $c$ to $(-1)^{\downarrow}c$, where $\downarrow$ means the number
of odd chords of $c$.

One easily checks that  either RHS or LHS
of the framed $4T$-relation in Fig. \ref{framed4T} changes
its sign.

\end{proof}

\begin{definition}
Similarly to chord diagrams (framed or not) one can define {\em arc diagrams}
where instead of an oriented {\em circle} and {\em chords}
we deal with an oriented {\em line} and arcs
connecting points on it.
\end{definition}

For arc diagrams we similarly define
linear spaces $A,A^{f}$, and 
quotient spaces ${\cal A},{\cal A}^{f},
{\cal A}^{f*}$.

There is an obvious map, called 
{\em the closure} from the
linear space of arc diagrams
to the linear space of chord diagrams
which closes up the line to the circle.

The inverse map (breaking a circle
at a point which does not belong to
a chord) is not well defined: 
the result depends on the choice
of the breaking point (\cite{IM}).

The analogue of Lemma \ref{lmm}
works verbally for arc diagrams.

\begin{definition}
By the {\em one-term relation} on 
(framed) chord diagrams
we mean a relation saying that
if a chord diagram $D$ 
has an isolated chord  of framing 0 (i.e.,
a chord having two ends
adjacent along the circle of $D$) then $D=0$. 
\end{definition}

In the definition of the $1T$-relation even in the framed case
the chord should have framing zero.

\begin{definition}
By a {\em weight system} (resp.,
{\em framed weight system}) we mean
a linear function on chord diagrams
(resp., framed chord diagrams)
satisfying the $4T$-relations
(resp., {\em framed $4T$-relations})\footnote{In different sources, 
people may or may not require
the $1T$-relation to hold; later
on we shall mention the $1T$-relation
explicitly.}.
\end{definition}

For arc diagrams (framed or not
framed) there is a natural way
to multiply $A, B\mapsto A\cdot B$
by attaching the head of the
line for $A$ to
the tail of the line for $B$.

Certainly, this definition can
be extended when we take the
quotient by any relations
(1T or 4T), so we get
${\cal A}, {\cal A}^{f},
{\cal A}^{f*}$, and
a natural isomorphism
${\cal A}^{f}$ between ${\cal A}^{f*}$

For chord diagrams (on the circle)
one can define the multiplication
by cutting two circles at some
points (distinct from the end of
any chord) and attaching to one
another with respect to the orientation.
The cutting operation is {\em not} well defined,
but if we take unframed chord diagrams
modulo the 4T-relation, it becomes well defined (see, e.g.,
\cite{Bar-Natan}). Once cutting
becomes well defined, we get
a well defined multiplication.

In particular this means
that {\em chord diagrams modulo the $4T$-relation form an algebra}, moreover, this algebra is {\em commutative}, since
the product $AB$ and the product $BA$
can be chosen identical.
This algebra is naturally graded by
the number of chords\footnote{There is
also a coalgebra structure on
these spaces which is well defined
everywhere}.

For framed chord diagram the 
cutting operation is not well-defined,
however, the question about commutativity
of framed arc diagrams is still open,
\cite{IM}.

\section{Vassiliev's invariants}

Given an oriented manifold $M^{3}$,
we
consider knots in $M^{3}$ and standardly define singular knots, derivatives,
and invariants of order $n$ for knots
in $M^{3}$ in a standard way.

For simplicity we shall deal
with knots (and singular knots) which are $\Z_{2}$-homologically
trivial in $M$.

\begin{definition}
Let $n$ be a non-negative integer.
By a {\em singular knot} of order
$n$ in a $3$-manifold $M^{3}$ we mean a map $k: S^{1}\to M^{3}$ which is an embedding
everywhere except $n$ pairs of ``glued'' points
$a_{i},b_{i}\in S^{1}$, such that
$k(a_{i})=k(b_{i}), i=1,\cdots, n$, no other points are glued, and all 
intersections at $k(a_{i})$
are simple and transverse.
\end{definition}

Singular knots are considered up
to a natural isotopy (keeping all glued
points glued).
Each singular knot $K$ of order $n$
at each crossing $k_{i}=k(a_{i})=k(b_{i})$
admits two natural resolutions, see Fig. \ref{herefig},
which lead to two singular knots of 
order $n-1$.

\begin{figure}
\centering\includegraphics[width=200pt]{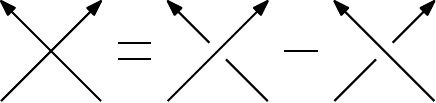}
\caption{The Vassiliev relation}
\label{herefig}
\end{figure}

One can smooth any number $j=1,\cdots, n$
of crossings in $2^{j}$ ways
which leads to a singular knot of order
$n-j$, in particular, for $j=n$
we get $2^{n}$ ways of getting 
non-singular knots.

With a singular knot of order $n$
we associate its {\em chord diagram}
(with $n$ chords)
by connecting by chords those points
on $S^{1}$ having the same image,
see Fig. \ref{trefle}.

\begin{figure}
\centering\includegraphics[width=200pt]{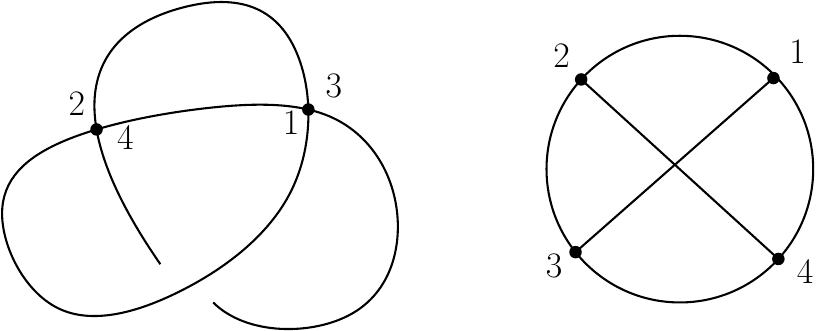}
\caption{The chord diagram
corresponding to a singular knot}
\label{trefle}
\end{figure}

\begin{definition}
Let $f$ be an invariant of knots in
$M$. We define the $n$-th derivative
of $f$ (denoted by $f^{(n)}$) as 
the function on singular knots with
$n$ crossings as an alternating sum defined as follows.
Given a singular knot ${\cal K}$ with 
crossings $c_{1},\cdots, c_{n}$;
we take all $2^{n}$ resolutions
$K_{s}=K_{s_1,\cdots,s_{n}}, s_{j}\in \{0,1\}$
and set

$$f^{(n)}(K)= \sum_{s} (-1)^{\sum s_{i}}
f(K_{s}) .$$

In particular, for a singular knot $K$ with one singular crossing the first derivative
$f'(K)$ is defined as $f(K_{0})-f(K_{1})$
where $K_{0}$ and $K_{1}$ are the two
resolutions of $K$ as in Fig. \ref{herefig}.
 
\end{definition}

For knots in $\R^{3}$ it is known that the symbol is a function
on the corresponding (unframed) chord diagram
(it does not change if we pass from
one singular knot to another leaving
singular crossings fixed).

The crucial observation 
(Vassiliev \cite{Vas})
is that such a function has to satisfy
the $1T$-and the $4T$-relations.
Interestingly the opposite is true
as well for the case of $\R^{3}$ (Kontsevich, \cite{Kontsevich}, see also Bar-Natan \cite{Bar-Natan}),
namely, each linear function on
unframed chord diagrams satisfying the
$1T$- and the $4T$-relations
is a symbol of some Vassiliev invariant; we shall discuss Vassiliev invariants and
Kontsevich integral in Section 4.

Framed knots in $M^{3}$ possess
much more topological information
which allows one to enhance chords
of the chord diagram with some
{\em framing}. 

Assume that $H_{1}(M^{3},\Z_{2})\neq 0$ and fix a cohomology class 
$\alpha \in H^{1}(M^{3},\Z_{2})$ 
which can be non-trivially evaluated
on $H_{1}(M^{3},\Z_{2})$. 

With each closed loop $l$ in $M^{3}$
we associate the element from $\alpha(l)\in\Z_{2}$.
Assume $\alpha[K]=0\in\Z_{2}$.

This allows one to define the
{\em framing} (parity) for singular crossings and chords of the chord
diagram for
any singular knot obtained from $K$: if a
chord $c$ of the chord diagram $K$
corresponds to two glued points
$c_{1}=c_{2}$ then it gives rise
to two ``halves'', $K'$ and
$K''$, and we evaluate the
class $\alpha$ on either of
them to define the parity
$p(c)=\alpha([K'])=\alpha([K''])$.
It is well defined since $\alpha([K])=0$.

\begin{definition}
We say that a framed chord diagram is
{\em realisable} in the manifold
$M^{3}$ if there is a singular
knot having this chord diagram.
\end{definition}

It is a well-known fact that 
{\em any symbol of an order $n$
Vassiliev invariant} of knots in
$\R^{3}$ is a weight function
satisfying the $1$-term relation,
see the section below about the
Kontsevich integral.

Before generalising this theory
to the case of {\em framed}
chord diagrams, it is important
to note that 
{\em not all framed diagrams are realisable} for $M^{3}$. Indeed, for example
if $H_{1}(M^{3},\Z_{2})=0$ (as
in the case of $\R^{3}$) then
all chords of all chord diagrams corresponding
to singular knots have framing $0$.

\begin{definition}
We say that an invariant $f$ of knots
in $M$ {\em has order $\le n$} if
its $(n+1)$-st derivative is $f^{(n+1)}=0$.
We say that $f$ is {\em a Vassiliev invariant of order $n$} if it has
order $\le n$ but not $\le (n-1)$.
\end{definition}

\begin{definition}
Let $f$ be a Vassiliev invariant of a knot in
$M^{3}$ of order $n$. The {\em symbol} of a Vassiliev invariant is its $n$-th derivative $f^{(n)}$.

\end{definition}

Having a singular knot $K$ of order $n$ and a Vassiliev invariant $f$ of order $n$,
the value of $f$ does not change if we perform a crossing switch (pass through a singular knot of order $n+1$).

So, the symbol of the Vassiliev invariant for knots in $M^{3}$ is well defined on
isotopy classes of singular knots modulo
the crossing switch relations (homotopy
of a singular knot).

It is now important to note that 
{\em the symbol of a Vassiliev invariant
for {\bf classical} knots (in $\R^{3}$)
is a well-defined function on chord diagrams since the above isotopy classes
are defined by chord diagrams}.

For arbitrary $M^{3}$ this may not be true.
For example, if we have a chord diagram with one solitary chord then the symbol of a Vassiliev invariant of order $1$ should be equal to $0$ on such a chord diagram
if one of the halves of the singular knot
is homotopically trivial, but it need not be so in general case.

Certainly, just one $\Z_{2}$-framing of a chord diagram coming from just one cohomology class $\alpha$ is not sufficient to restore 
the homotopy class of a singular knot\footnote{This can be done if
we impose some more accurate (homotopy) framing; we shall do it elsewhere.}.

%\begin{definition}
%A framed chord diagram $C$ is
%{\em realisable} with respect to the
%cohomology class $\alpha\in H_{1}(M^{3},%\Z_{2})$ if 
%there is a singular knot $K\in M^{3}$ %having this diagram.

%\end{definition}

\begin{definition} {\em A Vassiliev invariant $f$ of order $n$} for knots in $M^{3}$ is 
{\em good} with respect to the cohomology
class $\alpha\in H^{1}(M^{3},\Z_{2})$ if 
its value on singular knots of order $n$ is well defined on framed chord diagrams of order $n$ (does not depend on singular knot
itself but rather on the chord diagram).
\end{definition}

\begin{lemma}
Let $C_{1},C_{2},C_{3},C_{4}$
be four framed chord diagrams
taking part in the four-term relation.

Then if one of $C_{i}$ is realisable
then all four diagrams are realisable.
\end{lemma}
This lemma is easy: starting from
realisation of one $C_{i}$,
we can transform the singular knot {\em locally}
to get all other $C_{j}$.

Let us fix a cohomology class
$\alpha\in H^{1}(M^{3},\Z_{2})$ and consider
knots in $M^{3}$ such that $\alpha([K])=0$.

The main result of the present section is the following
\begin{theorem}
Let $v^{(n)}$ be the symbol of a good 
Vassiliev invariant $v$ of order
$n$ defined as a function on realisable framed
chord diagrams. 

Then it satisfies the framed $4T$-relation 
for all quadruples of {\em realisable}
chord diagrams.
Also, it satisfies the $1T$-relation
for realisable chord diagrams.
\label{mainthm}

Consequently,
each symbol of a good Vassiliev invariant of order $n$ is mapped to a function on the quotient space ${\cal C}^{f}_{n}$.
\end{theorem}

\begin{proof}

The proof is rather standard.
Indeed, for the $1T$-relation one
can construct two diagrams of a
singular knot $K$.
For the $4T$-relation we just
repeat Fig.14.1 from 
\cite{KnotTheory}.

Consider an invariant $v$ of order  $n$ and the
values of its symbol on these four knots. Vassiliev's
relation implies the relations shown in Fig.~\ref{F6}.

\begin{figure}[t]
\begin{center}

\hspace{-1cm}
\unitlength 0.9mm
\linethickness{0.8pt}
\begin{picture}(125,101.33)
\put(35,90){\vector(1,0){0.2}} \put(15,90){\line(1,0){20}}
\multiput(20,80)(0.12,0.21){67}{\line(0,1){0.21}}
\put(31.67,101.33){\vector(1,2){0.2}}
\multiput(29.33,97)(0.12,0.22){20}{\line(0,1){0.22}}
\put(25,100.67){\vector(-1,1){0.2}}
\bezier{116}(25,80)(35,90.33)(25,100.67)
\put(30,90){\circle*{1.33}} \put(25,90){\circle*{1.33}}
\put(35,65){\vector(1,0){0.2}} \put(15,65){\line(1,0){20}}
\put(35,40){\vector(1,0){0.2}} \put(15,40){\line(1,0){20}}
\put(25,65){\circle*{1.33}} \put(25,40){\circle*{1.33}}
\put(25,15){\circle*{1.33}} \put(20,55){\vector(1,2){10}}
\bezier{20}(25,55.33)(22.67,56.67)(23,59.33)
\put(25,75){\vector(1,1){0.2}} \bezier{76}(21,61)(17,68)(25,75)
\put(20,65){\circle*{1.33}} \put(20,30){\vector(1,2){10}}
\bezier{44}(25.33,30)(30,36)(30.67,39)
\put(25,50){\vector(-1,1){0.2}}
\bezier{44}(30.67,41)(29,47)(25,50) \put(28,46.67){\circle*{1.33}}
\put(15,15){\line(1,0){3.33}} \put(21.33,15){\vector(1,0){13.67}}
\put(20,5){\vector(1,2){10}} \put(25,25){\vector(1,1){0.2}}
\bezier{112}(25,5)(15,15)(25,25) \put(65,90){\circle*{1.33}}
\put(105,90){\circle*{1.33}} \put(65,40){\circle*{1.33}}
\put(105,40){\circle*{1.33}} \put(65,15){\circle*{1.33}}
\put(105,15){\circle*{1.33}}
\multiput(60,80)(0.12,0.21){67}{\line(0,1){0.21}}
\multiput(100,80)(0.12,0.21){67}{\line(0,1){0.21}}
\put(71.67,101.33){\vector(1,2){0.2}}
\multiput(69.33,97)(0.12,0.22){20}{\line(0,1){0.22}}
\put(111.67,101.33){\vector(1,2){0.2}}
\multiput(109.33,97)(0.12,0.22){20}{\line(0,1){0.22}}
\put(75,40){\vector(1,0){0.2}} \put(55,40){\line(1,0){20}}
\put(115,40){\vector(1,0){0.2}} \put(95,40){\line(1,0){20}}
\put(55,15){\vector(1,0){20}} \put(95,15){\line(1,0){3.33}}
\put(101.33,15){\vector(1,0){13.67}} \put(22,8.67){\circle*{1.33}}
\put(75,90){\vector(1,0){0.2}} \put(55,90){\line(1,0){20}}
\bezier{44}(65,80)(69.33,84.67)(70,89.33)
\put(65,100){\vector(-3,4){0.2}}
\bezier{40}(70.33,91.33)(68.33,95.67)(65,100)
\put(105,100){\vector(-1,1){0.2}}
\bezier{116}(105,80)(115.67,90)(105,100)
\put(95,90){\line(1,0){14}} \put(112,90){\vector(1,0){3}}
\put(60,55){\vector(1,2){10}} \put(100,55){\vector(1,2){10}}
\put(65,65){\circle*{1.33}} \put(105,65){\circle*{1.33}}
\put(55,65){\vector(1,0){20}}
\bezier{20}(65,55)(62.67,57.33)(63,59.33)
\bezier{20}(61.33,60.67)(59.33,62.67)(60,64.33)
\put(65,75){\vector(1,1){0.2}}
\bezier{44}(60,65.67)(60.33,70.33)(65,75)
\bezier{20}(105,55.33)(103.67,56.67)(103,59.33)
\put(105,75){\vector(1,1){0.2}} \bezier{76}(101,61)(95,70)(105,75)

\put(95,65){\line(1,0){3}} \put(100,65){\vector(1,0){15}}

\put(60,30){\vector(1,2){10}}
\bezier{44}(65,30)(70,34.67)(70,39)
\bezier{24}(70,41)(70.67,44.33)(68.67,45)
\put(65,50){\vector(-3,4){0.2}}
\bezier{16}(67,47)(66.67,48.67)(65,50)
\bezier{44}(105,30)(109.67,36)(110.33,39)
\put(104.67,50){\vector(-1,1){0.2}}
\bezier{44}(110.33,41)(106,47)(104.67,50)
\put(100,30){\line(1,2){7}} \put(109,46.33){\vector(1,2){3}}
\put(60,5){\vector(1,2){10}}
\bezier{16}(65,5.33)(63,6.33)(62.67,8.33)

\put(65,25){\vector(3,4){0.2}}
\bezier{76}(60.67,10)(59,12)(59,14)
\bezier{76}(59,16)(63,20)(65,25)

\put(105,25){\vector(1,1){0.2}}
\bezier{112}(105,5)(95,15)(105,25)
\put(99.33,3.67){\line(1,2){2}}
\put(102.67,10.33){\vector(1,2){7.33}}
\put(10,90){\makebox(0,0)[cc]{$v^{(n)}($}}
\put(38.33,90){\makebox(0,0)[cc]{$)$}}
\put(50,90){\makebox(0,0)[cc]{$v^{(n-1)}($}}
\put(90,90){\makebox(0,0)[cc]{$v^{(n-1)}($}}
\put(78.33,90){\makebox(0,0)[cc]{$)$}}
\put(118.33,90){\makebox(0,0)[cc]{$)$}}
\put(42,90){\makebox(0,0)[cc]{$=$}}
\put(82,90){\makebox(0,0)[cc]{$-$}}
\put(125,90){\makebox(0,0)[cc]{$=a-b$}}
\put(10,65){\makebox(0,0)[cc]{$v^{(n)}($}}
\put(10,40){\makebox(0,0)[cc]{$v^{(n)}($}}
\put(10,15){\makebox(0,0)[cc]{$v^{(n)}($}}
\put(38.33,65){\makebox(0,0)[cc]{$)$}}
\put(38.33,40){\makebox(0,0)[cc]{$)$}}
\put(38.33,15){\makebox(0,0)[cc]{$)$}}
\put(50,65){\makebox(0,0)[cc]{$v^{(n-1)}($}}
\put(50,40){\makebox(0,0)[cc]{$v^{(n-1)}($}}
\put(50,15){\makebox(0,0)[cc]{$v^{(n-1)}($}}
\put(90,65){\makebox(0,0)[cc]{$v^{(n-1)}($}}
\put(90,40){\makebox(0,0)[cc]{$v^{(n-1)}($}}
\put(90,15){\makebox(0,0)[cc]{$v^{(n-1)}($}}
\put(78.33,65){\makebox(0,0)[cc]{$)$}}
\put(78.33,40){\makebox(0,0)[cc]{$)$}}
\put(78.33,15){\makebox(0,0)[cc]{$)$}}
\put(118.33,65){\makebox(0,0)[cc]{$)$}}
\put(118.33,40){\makebox(0,0)[cc]{$)$}}
\put(118.33,15){\makebox(0,0)[cc]{$)$}}
\put(42,65){\makebox(0,0)[cc]{$=$}}
\put(42,40){\makebox(0,0)[cc]{$=$}}
\put(42,15){\makebox(0,0)[cc]{$=$}}
\put(82,65){\makebox(0,0)[cc]{$-$}}
\put(82,40){\makebox(0,0)[cc]{$-$}}
\put(82,15){\makebox(0,0)[cc]{$-$}}
\put(125,65){\makebox(0,0)[cc]{$=c-d$}}
\put(125,40){\makebox(0,0)[cc]{$=c-a$}}
\put(125,15){\makebox(0,0)[cc]{$=d-b$}}
\end{picture}
\end{center}
\vspace{-0.6cm} \caption{The same letters  express $v^{(n-1)}$ for
isotopic long knots} \label{F6}
\end{figure}

Obviously, $$(a-b)-(c-d)+(c-a)-(d-b)=0.$$ In order to get singular
knots, one should close the fragments drawn in Fig.\ref{F6}.
In Fig. \ref{F6} we have
four singular knots of order
$n$, where two singular crossings
are present and the remaining $n-2$
ones are outside the picture
(they are the same for all terms).

We have to close these up to get
four chord diagrams. 

They will differ exactly in the neighbourhood of the two chords.
A careful look at the framings 
gives us exactly the framed $4T$-relation.

The proof of the $1T$-relation is left to the
reader.

\end{proof}

There are a lot of nice weight systems coming from parity; even in degree $1$ we can count odd chords; one may expect to have nice and easy combinatorial formulas for Vassiliev invariants of virtual knots and for knots in complements to other knots.

\section{The Kontsevich integral}

For knots in $\R^{3}$ the construction
of the Vassiliev invariant from its symbol is realised
as follows.

Let $Z$ be the Kontsevich integral 
\cite{Kontsevich} defined as a series
of linear functions on framed chord diagrams.

The {\em preliminary Kontsevich integral}
$Z(\cdot)$ is defined by the following
equation.

\begin{equation}\label{preKontsevich}
Z(K)=\sum_{m=0}^{\infty} { {1\over {(2\pi i)^m}}
\int_{c_{min}<t_{1}<\dots<t_{m}<c_{max} \atop
t_{j}\mbox{\small{non-critical}}}{
\sum_{P=\{(z_{j},z_{j}^{'})\}}{(-1)^{\downarrow}D_{P}
\bigwedge_{j=1}^{m} {{dz_{j}-dz_{j}^{'}}\over {z_{j}-z_{j}^{'}}} }
}}.
\end{equation}
We decree the coefficient of the ``empty'' chord
diagram to be equal to one.

In (\ref{preKontsevich}) $K$ is a knot
in $\R^{3}=\cC_{z}\times \R^{1}_{t}$ which
is assumed to be a Morse knot with
respect to the height function $t$;
 $c_{min}$
and $c_{max}$ are minimal and maximal
values of $t$
on the knot $K$, and $t_{k},k=1,\cdots, m$ are other critical values; for each
non-critical value $t_{j}$ we choose a pair of points complex points $\{z_{j},z'_{j}\}\times t_{j}$ on the knot; having
choses such points and pairing them 
by chords,
we get the chord diagram $D_{P}$ corresponding to the configuration
$P$. As all $t_{k}$ move,
pairs $z_{k},z'_{k}$ move accordingly,
and we can integrate the form
$\bigwedge_{j=1}^{m} {{dz_{j}-dz_{j}^{'}}\over {z_{j}-z_{j}^{'}}} .$ 

The normalised version of the Kontsevich integral
$I(\cdot)$ is defined by norming 
$I(K)=Z(K)/Z(\infty)^{c/2-1}$,
where $\infty$ is the unknot in $\R^{3}$
having two maxima and two minima with
respect to $z$-coordinate, and
$c$ is the number of extrema of the knot $K$.

Both $Z$ and $I$ are valued in 
linear combinations of chord diagrams.

Denote by $I(K)_{n}$ the $n$-th
graded part of $I(K)$, i.e.,
the linear combination of chord diagrams
of chord diagrams from
$I(K)$ having $n$ chords.

The Kontsevich theorem (for knots
in $\R^{3}$) states that for a weight system $w$ of order $n$,
the invariant
$$w(\{I(K\}_{n})$$ 
is a Vassiliev invariant with symbol $w$.

Hence, we have a very nice formula for
{\em all Vassiliev invariants of classical knots}.

The universal invariant for the
Vassiliev invariant in thickened
surfaces does exist, 
\cite{AMR,HM} but not quite
in the way given above.

Can we integrate any weight system 
``by hand''? This problem has been
studied by many authors.

A fundamental theorem due to M.N.Goussarov, (see \cite{GPV}) says that {\em any Vassiliev invariant admits
a combinatorial formula in 
terms of chord diagrams}. We are not going
to give an exact definition of a combinatorial
formula and advise the reader to enjoy
Goussarov's orginal paper!

\section{Further directions}

How can we upgrade such formulas 
by using framed chord diagrams?
Where to find such weight systems?
How to integrate them? For the case of
$\Z_{2}$ this is done in \cite{CDL}.

For classical knots, all chord diagrams
have all chords with framing $0$.

However, it is well known that
the Kontsevich integral can be generalised
for various manifolds of $S_{g}\times I$,
where $S_{g}$ is a sphere with $g$ handles.

Certainly it works for
$\R^{2}\backslash \{*,\cdots, *\}\times I$ as well.

\subsection{What to do for knots in complements to knots?}

The idea is very simple: we
just double the knot $K$ to get a $2$-component
link $L_{1}\sqcup L_{2}$, and then consider
$L_{1}$ as a knot in the complement
$\R^{3}\backslash L_{2}$.

Our hope is to enhance the notion
of the Vassiliev knot invariants
(for 1-component knots by using
this construction, parity, framed chord diagrams, and the corresponding version of the Kontsevich integral).

In particular, we can do the Kontsevich
integral for the thickening of the 
plane with $2n$ punctures
$$P_{n}=\R^{2}\backslash \{*,\cdots, *
\} \times \R^{1}.$$

We have
$H_{1}(P_{n})=\Z^{n}$ generated by homology classes $a_{j},j=1,\cdots, n$ 
corresponding to punctures. 
Let $K$ be a knot in $P_{n}$.
We have $[K]=\sum_{i=1}^{n} \alpha_{i}a_{i}$. We say that $K\subset P_{n}$ is {\em even}
if $\sum_{i}\alpha_{i}\equiv 0\; \mbox{(mod 2)}$.

Consider the Kontsevich integral
for $K\subset P_{n}\subset \R^{3}$. Each chord diagram $D_{P}$ in (\ref{preKontsevich})
naturally acquires the parity,
hence we can upgrade
$Z(K)$ to its framed version 
$Z^{f}(K)$ and $I(K)$ to $I^{f}(K)$ by replacing all terms in $Z(K)$ and
$Z(\infty)$ by their framed version.

After that, the program is as follows:
for a knot $K$ in the complement 
$\R^{3}\backslash K'$
to the knot $K'\subset \R^{3}$
such that the linking number
$lk(K,K')$ is even, 
we can assume that $K'$ is in
general position with respect to
some horizontal plane $P$.

Certainly, $K\cap P$ consists
of some odd number of points (say, $2n$) and
this number may vary during the isotopy of $K$.
Let us consider
$K$ in $P\times{[-\varepsilon,+\varepsilon]}\backslash K'$,
where ${\varepsilon}$ is a small number
such that the neighbourhood of $P$
intersects $K'$ at $2n$ segments.

Hence we can take $I^{f}(K,K')=I^{f}(K)$
with respect to this $P_{n}$.

We conjecture that $I^{f}(K,K')$
will be an invariant of $K'$.

Certainly, $I'(K,K')$ allows us to
recover $I(K)$ by forgetting
the framing of all chords in chord
diagrams.

To get {\em the other knot $K'$}
we can just take the {\em doubling
of $K$}.

\subsection{Doubling the knot}

Let ${\cal K}$ be the set of knots
in $\R^{3}$.
Consider the sets ${\cal L'},{\cal L}$ of ordered two-component links $L_{1}\sqcup L_{2}$
where:

\begin{enumerate}

\item
In ${\cal L'}$ links are considered up to isotopy,
\item 
In ${\cal L}$ the first component $L_{1}$ is considered up to homotopy in the complement $\R^{3}\backslash L_{2}$
whence $L_{2}$ is considered up to isotopy.
\end{enumerate}

We denote this composite  map
$f:{\cal K}\to {\cal L'}\to {\cal L}$ by
$f: K\mapsto (L_{1},L_{2})\in {\cal L}$.

\begin{theorem}
The map $f$ is well defined.
\end{theorem}

Indeed, $f$ is a composite of 
two well defined maps: first
we take the satellite operation and
then we weaken the equivalence relation.
Hence, any topological invariants of
the double of $K$  yield invariants of $K$.

We can consider oriented or non-oriented
links. In the sequel, all knots and 
links are thought to be oriented
unless specified otherwise.

Moreover, the component $L_{2}$ being
a knot lives not just in $\R^{3}$ though
in the complement to $L_{1}$ hence,
for $L_{2}$ we have more tools for constructing invariants. In particular, one can
use the parity coming from the 
$\Z_{2}$-homology group $H_{1}(\R^{3}\backslash
L_{1},\Z_{2})$.
The general concept of parity was established in
\cite{Parity2010}.

In \cite{Parity2010} the author
introduced the notion of {\em parity}
into knot theory and low-dimensional
topology. Roughly speaking, when
one can distinguish between 
{\em odd} and {\em even} types
of crossings which satisfy some
natural axioms when moves are applied
to knot diagrams, then one can
construct very powerful invariants
and enhance some known ones.
In many cases parity originates
from $\Z_{2}$-homology \cite{IMN}.

Then $L_{1}$ (more precisely,
$H_{1}(L_{1},\Z_{2})$) will be a
source of parity for 
the knot $L_{2}$ living in the complement
$\R^{3} \backslash L_{1}.$

\section{Epilogue. More framings and more problems. Conjectures}

First, we conjecture that the multiplication in $\Z_{2}$-framed
chord diagram algebra is {\bf not}
well defined and that 
framed chord diagrams 
are {\bf not} invertible modulo
$1T$- and framed $4T$-relations.

Besides, we conjecture that
the above composite map will lead to {\em generalisations}
of Vassiliev invariants
by using framed diagrams
and the structure on them
which may lead to solution
of some nice problems for classical knots
(knot invertibility etc). 

Certainly, we simplified the situation
drastically when we took only $\Z_{2}$-framing. Indeed, if our knot $K$
lives in a $3$-manifold $M^{3}$
with a non-trivial fundamental group
then it would be more reasonable to 
associate some integer homology or
first homotopy to chords of the corresponding chord diagram.

Certainly, this could lead to a richer linear
space of chord diagrams (coalgebra), but the one with
$\Z_{2}$-framing is already interesting and not
quite well studied (say, is the product
well defined).

Do we have an enhancements of a knot $K_{2}$ in a complement to another knot $\R^{3}\backslash K_{1}$? To this end, one should
first investigate the set of {\em moves}
for representing isotopy classes of pairs of knots $(K_{1},K_{2})$ such that $K_{1}$ lives in a plane and $K_{2}$ goes through
this plane transversally in some $2n$ points.

Some steps towards it were investigated in
\cite{GM}.

However, we do not have a set of such moves, but if we take just the above parity which counts how many times some part of $K_{2}$ winds around $K_{1}$, then this parity is visible from the plane $P_{n}$ as described above. 

Certainly, this parity does not change
if $K_{1}$ undergoes {\em homotopy} rather than isotopy, which motivated us to construct the subsection about doubling.

One possible outcome can be formulated as follows.

We say that the link $L_{1}\sqcup L_{2}$ is {\em standard}
if there is a plane $\R^{2}$ such that
for some small $\varepsilon$ the intersection
$L_{1}\cap (\R^{2}\times [-\varepsilon,+\varepsilon])$ 
consists of vertical segments
$\{*\}\times [-\varepsilon,+\varepsilon]$
(say, $2n$ ones) and $L_{2}\subset
\R^{2}\times [-\varepsilon,+\varepsilon]$.

One possible program would be to look for a 
``universal $\Z_{2}$-framed classical finite-type invariant'' $f$ taking  knots in
$(\R^{2}\backslash \{\mbox{evenly many points}\})\times I$
to $\Q$
(defined once for any even number of 
points in a consistent way)
such that the composition of the maps below
gives an invariant of the knot $K$:

First, take
$K\to L(K) = L_{1}\sqcup L_{2} \to L_{2} \subset \R^{3}\backslash L_{1}$.

Now put $L_{1}\sqcup L_{2}$ in the standard form.
Then $L_{2}\subset \R^{3}\backslash L_{1}$ gives
rise to $L_{2} \subset (\R^{2}\backslash \{\mbox{evenly many points}\})\times I$.

After that we take an invariant $f$
taking 
$L_{2} \subset (\R^{2}\backslash \{\mbox{evenly many points}\})\times I$ to $\Q$. Can
we make the whole composition to be an invariant
of $K$?

An important problem here is how to understand
the structure of ${\cal L}$.

We conjecture two standard links $(L_{1}\sqcup
L_{2})$ and $(L'_{1}\sqcup L'_{2})$  are equivalent
as elements of ${\cal L}$ (maybe, for different
$n$)
if and only if they
can be transformed to each other by:

\begin{enumerate}

\item Isotopy of the first component
$L_{1}$ in the thickened punctured plane 
$P\times [-\varepsilon,+\varepsilon]\backslash L_{2}$;

\item Homotopies of $L_{2}$ in the complement
to $P\cup L_{1}$;

\item Addition/removal of pairs of points
in $P$ as $L_{2}$ goes through it.

\end{enumerate}

\end{document}